\newtheorem{theorem}{Theorem}
\newtheorem{lemma}{Lemma}
\numberwithin{theorem}{section}
\numberwithin{lemma}{section}
\numberwithin{figure}{section}
\newcommand{\rd}{\mathrm{d}}
\newcommand{\ri}{\mathrm{i}}
\newcommand{\RR}{\mathbb{R}}
\newcommand{\Span}{\text{Span}}
\newcommand{\Kn}{\mathsf{Kn}}
\begin{document}

\title{On quantifying uncertainties for the linearized BGK
kinetic equation}
\author{C. Klingenberg, Q. Li, M. Pirner}
%
%
\date{}
\maketitle


\abstract{We consider the linearized BGK equation and want to quantify uncertainties in the case of modelling errors. More specifically, we want to quantify the error produced if the pre-determined equilibrium function is chosen inaccurately. In this paper we consider perturbations in the velocity and in the temperature of the equilibrium function and consider how much the error is amplified in the solution.}

\section{Introduction}
Kinetic equation is a set of integro-differential equations that describe the collective behavior of many-particle systems. The to-be-solved unknown function is a probability distribution of particles defined on the phase space, and kinetic equation characterizes its evolution in time and space. The equation typically has one transport term representing the movement of particles and one collision operator that describes the interactions between particles. The specific form of the transport and the collision operators depend on the system one is looking at. Typically people use radiative transfer equation for photon particles, the Boltzmann equation for rarified gas particles, the Fokker-Planck equation for plasma, and run-and-tumble models for bacteria. There are many more other examples.

Uncertainty is a nature of kinetic theory. It has various of origins. The forms of terms in the equation are usually unjustified due to the modeling error, the blurred measurements are typically not enough to sufficiently determine the coefficients, and the initial and boundary conditions are never provided as accurate as they are supposed to be. They all contribute the inaccuracy of the system description. It is not realistic to look for the most accurate description of systems, nor expect the exact true solution, and thus we instead look for possibilities of quantifying the uncertainties, and ask if the error is controllable even if the models and measurements are not accurate. As presented above there are many origins of error, and in this paper we focus on the modeling error. More specifically, a typical way of simplifying kinetic equations is to perform linearization around a pre-determined equilibrium function and compute the linearized kinetic equation, and we would like to understand the error produced if the pre-determined equilibrium function is chosen inaccurately. We plan to answer this question from both analytical point of view and numerical point of view. In particular we would like to understand that given certain perturbation on the pre-determined equilibrium where we perform the linearization, by how much the error is amplified in the solution, and how to characterize the perturbation numerically.

There have been many numerical techniques that were developed to address uncertainties. One very popular category of methods are termed generalized polynomial types. These include generalized polynomial chaos method (gPC)~\cite{GHANEM1996289,ghanem_construction_2006,Xiu_gpc, XK02}, and stochastic collocation method~\cite{babuska_stochastic_2007,XiuHesthaven_collocation}. These methods assume the uncertainties in the parameters of the equations are reflected as a polynomial type in the solution. And based on this assumption one applies either the spectral method, or the psudo-spectral method, and expand the solution in the random direction using polynomials. Another popular, or even classical method is the Monte Carlo type method, which also has many variations~\cite{fishman2013monte,Giles_MLMC,Barth2011,Charrier_MLMC}. With these methods one simply samples the random variable many times, and for each sample the parameters are fixed and the equation is considered deterministic, and one computes the equation. In the end one ensembles the solutions for the mean and the variance. Sometimes mathematicians categorize these methods based on if new implementations are needed. Since the Monte Carlo type method and stochastic collocation method simply call the deterministic solver many times, the old algorithms are therefore recycled and they are categorized as non-intrusive methods, while on the other hand, the traditional generalized polynomial chaos method is intrusive, wherein a completely new implementation is needed. In terms of the convergence rate, it is well-known that the Monte Carlo method converges slowly, while the gPC type methods are spectral types along the random directions, and automatically inherit the so-called spectral convergence: depending on the regularity of the solution in the random space, the method could be either algebrically fast or exponentially fast.

We would like to adopt the gPC framework for its possible fast convergence. To do that, in our setting, we mainly need to prove that the perturbation in the solution continuously depends on the perturbation in the equilibrium function where we choose to perform linearization. According to the standard spectral method theory, the higher degree of continuity means the faster convergence. Traditionally, this framework has been successfully applied in treating elliptic type equation~\cite{BNT07,BNT04, ZG12, cohen_analytic_2011, cohen_convergence_2010}, and the analysis sometimes even suggests new algorithms that better explore the solution structure~\cite{hou_li_zhang16_1,hou_li_zhang16_2,chkifa_breaking_2014,NTW08,nobile_sparse_2008,nobile_anisotropic_2008,SchwabTodor_sparse, GPZ15,DP_uncertaity}, but when applied onto hyperbolic type equations, this framework sees limited success due to the intrinsic difficulties~\cite{majda_gpc,DP_uncertaity}: the solution develops non-smooth structure, breaking the assumptions the spectral methods rely on.

The standard kinetic equation does not belong to either of the category mentioned above but could produce both. Depending on the regime one is interested in, kinetic equation would either converge to a hyperbolic type (such as BGK equation converging to the Euler equation) or a parabolic type (such as radiative transfer equation converging to the heat equation). On one hand, its transport term represents hyperbolic type and shows a traveling wave behavior, in the meantime, the collision term in kinetic equations are all coercive terms and thus provide some dissipative behavior and represents the parabolic type. This unique feature presents mathematicians a new world to explore and it indeed triggers many studies recently. Some recent results on the topic can be found in~\cite{JXZ15, HJ16,JLu16,JZ16, JLM16,JL16,LW16}. We have to mention, however, most of the proofs are accomplished on a case-by-case basis, and not necessarily in their sharpest estimates, especially in the big space long time regime, except in~\cite{LW16} where the authors started with an abstrat form and were able to employ the hypocoercivity for a uniform bound across regimes.

Follow the previous work, in this paper we explore the perturbation on the linearization point. We take the BGK equation as a starting point and perturb $u$, the bulk velocity, and $T$, the temperature in the equilibrium function, by $z$, a random variable. The domain of $z$ indicates the strength of the perturbation. And we would like to study how $f$, the solution to the linearized equation, respond to the variations in $z$.

We lay out the equation and its basic assumptions in Section 2, together with detailed studies of the convergence rate in time in the deterministic setting. Section 3, 4 and 5 are respectively devoted to the study extended to equations in various of regimes,  to equations involving randomness, and to scenarios when both present. We conclude in Section 6.

\section{Set-up}
The BGK equation, known as a simplified model of the Boltzmann equation, writes as:
\begin{equation}
\partial_t F + v\cdot\nabla_xF = \frac{1}{\Kn}\left( M[F] - F \right)
\end{equation}
where $F(t,x,v)$ is the distribution function living on phase space indicating the distribution of rarified gas. $M[F]$, the so-called Maxwellian function, is a Gaussian distribution function:
\begin{equation}
M[F] = \frac{\rho}{(2\pi T)^{d/2}} \exp^{-\frac{|v-u|^2}{2T}}\,,
\end{equation}
with its macroscopic quantities defined implicitly by $F$ such that the first $d+2$ moments are the same:
\begin{equation}
\int \phi(M[F] - F) \rd{v} = 0\,,
\end{equation}
with $\phi = [1,v,v^2]^T$. This property is typically called conservation property, since it immediately leads to density, momentum and energy conservation:
\begin{equation}
\partial_t\int\phi F\rd{v} + \nabla_x\int v\otimes vF\rd{v} = 0\,.
\end{equation}
If we use the definition:
\begin{equation}
\int F\rd{v} = \rho(x)\,,\quad\int vF\rd{v} = \rho(x)u(x)\,,\quad\text{and}\quad\int \frac{|v|^2}{2}F\rd{v} = E = \frac{1}{2}\rho u^2+\rho T\,.
\end{equation}
then the first two equations express the conservation law of the density and momentum. Note that second term in the last equation cannot be presented using any macroscopic quantities and thus the system is not closed.

$\Kn$ is termed the Knudsen number. It comes from rescaling the system by setting $t\to\frac{t}{\Kn}$ and $x\to\frac{x}{\Kn}$. When $\Kn$ is small, the system is seen in large domain and long time scale and falls in the hyperbolic regime. More specifically, as $\Kn\to 0$, the leading term in the equation reads:
\begin{equation}
\frac{1}{\Kn}\left(M[F]-F\right) = 0\quad\Rightarrow\quad F = M[F]\,,
\end{equation}
and thus $\int v|v|^2F\rd{v}$ could be explicitly expressed and we rewrite equation as:
\begin{equation}
\left\{\begin{array}{l}\partial_t\rho +\nabla_x\cdot(\rho u) = 0\\ \partial_t\rho u +\nabla_x(\rho u\otimes u+\rho T) = 0\\ \partial_t E + \nabla_x\left((E+\rho T)u\right)= 0\end{array}\right.
\end{equation}

For linearization we typically assume the solution is close enough to a particular Maxwellian, meaning there exists $f$ and $M_\ast$ such that:
\begin{equation}
F = (1+f)M_\ast\,,\quad\text{with}\quad |f|\ll 1\,.
\end{equation}

Plug this ansatz back into the full BGK equation and ignore the higher order expansion terms, we have:
\begin{equation*}
\partial_tf + v\cdot\nabla_xf = \frac{1}{\Kn}\mathcal{L}_\ast f = \frac{1}{\Kn}\left(m[f] - f\right)\,,
\end{equation*}
where $m$ is a quadratic function that shares the same moments with $f$, meaning:
\begin{equation}
\langle \phi\,,m-f\rangle_\ast = \int\left(\begin{array}{c}1\\v\\v^2\end{array}\right)(m[f]-f)M_\ast\rd{v} = 0\,.
\end{equation}
Here we used the definition of the inner product:
\begin{equation}
\langle f,g\rangle_\ast = \int fgM_\ast\rd{v}\,.
\end{equation}
This is the counterpart of the conservation law in linearized system since:
\begin{equation}
\partial_t\int\phi fM_\ast\rd{v} + \nabla_x\int v\otimes vfM_\ast\rd{v} = 0\,.
\end{equation}
Once again if $\Kn$ is small then in the leading order $f = m$ which leads to a closed Euler system, termed acoustic limit:
\begin{equation}
	\partial_t U+A\cdot\partial_x U=0\,.
\end{equation}
Here
\begin{equation}
A=\left(\begin{array}{ccc} u_* & \rho_* & 0 \\
        \frac{T_*}{\rho_*} & u_* & 1 \\
        0 & 2T_* & u_* \\
      \end{array}\right)\,,\quad\text{and}\quad U=[\tilde{\rho},\tilde{u},\tilde{T}]^T\,,
\end{equation}
and the macroscopic quantities are defined by:
\begin{equation}
  \int f\left(\begin{array}{c}1 \\v \\v^2\end{array}\right)\rd{v}=\left(\begin{array}{c}
\tilde{\rho} \\\tilde{\rho}u_*+\rho_*\tilde{u} \\\tilde{\rho}(u_*^2+T_*)+2\rho_* u_*\tilde{u}+\rho_*\tilde{T}\end{array}\right)\,.
\end{equation}

There are several very well-known properties of the linear operator:
\begin{itemize}
\item[1]{Coercive:} $\langle\mathcal{L}_\ast f\,,f\rangle_\ast\leq 0$\,,
\item[2]{Explicit null space:} $\mathcal{L}_\ast f = 0\quad f \in\Span\{1,v,v^2\}$,
\item[3]{Self-adjoint:} $\langle \mathcal{L}_\ast f\,,g\rangle_\ast = \langle f\,,\mathcal{L}_\ast g\rangle_\ast$.
\end{itemize}
Combining item $2$ and $3$ it is easy to see $\langle \mathcal{L}_\ast f\,,\phi\rangle_\ast = 0$.
If we consider $f\in L_2(M_\ast\rd{v})$, one could express $\mathcal{L}_\ast$ more explicitly. By the definition of $m[f]$ it is easy to see it is in fact a projection of $f$ weighted by $M_\ast$ on the quadratic function space:
\begin{equation}
\mathcal{L}_\ast f = m - f = \Pi_\ast{f} - f\,,\quad\text{with}\quad \Pi_\ast{f} = \sum_{i=0}^{d+1}\langle\chi_i\,,f\rangle_\ast\chi_i\,,
\label{eqn:L}
\end{equation}
where $\chi_i$ are basis functions satisfying:
\begin{itemize}
\item[1]{Expand the space} $\Span\{\chi_m,m=0,\cdots d+1\} = \Span\{1,v,v^2\}$,
\item[2]{Orthogonality} $\langle \chi_m\,,\chi_n\rangle_\ast = \delta_{mn}$.
\end{itemize}

With the Maxwellian function $M_\ast$ predetermined, they are simply the first $d+2$ Hermite polynomials associated with the Maxwellian. Even more if we set $\chi_m$ the $m$-th Hermite polynomial for all $m$, then
\begin{equation}
\mathcal{L}_\ast{f} = - \sum_{m=d+2}^\infty\langle\chi_m\,,f\rangle_\ast\chi_m\,.
\end{equation}
This expression also explicitly suggests the coercivity of the operator.

The linearized BGK operator has been studied by many researchers. Serving as the simplied version of the linearized Boltzmann equation. Its negative spectrum provides dissipative behavior, which helps us in getting existence and uniqueness of the solution at ease. In the boundary layer analysis, the nonlinear collision operator is far from being understood, the linearized equation is the stepping stone for connecting the Dirichet data for the kinetic and the Dirichlet data for the interior Euler equation. We mention several recent work on boundary layer analysis for the linearized BGK equation here~\cite{LiLuSun2016_JSP,LiLuSun2016_half,LiLuSun2015_general,LiLuSun2015JCP}.

However, all these studies are based on the assumption that the Maxwellian $M_\ast$, the function we linearize upon, is given a priori, which is typically not the case. Taking numerical algorithm provided in~\cite{LiLuSun2015JCP} for example, we choose to perform linearization upon the Maxwellian function provided from the previous time step as an approximation to the true Maxwellian, which is in fact at least $\mathcal{O}(\Delta t)$ away from the real Maxwellian. A natural question one needs to address there is: is such approximation a good approximation, or rather, if the Maxwellian chosen is off from the accurate one by $\mathcal{O}(\Delta t)$, how much error does $f$ contain.

Since $M_\ast$'s dependence on $\rho_\ast$ is linear, and thus its reflection in $f$ is of less interest. We in this paper only study the possible deviation of the solution $f$ when $M_\ast$ has a uncertain $u_\ast$ and a uncertain $T_\ast$.

\section{Variation in $u$}
In this section we study the solution's response to deviations in $u_\ast$. We firstly repeat the equation in 1D:
\begin{equation*}
\begin{cases}
\partial_tf + v\partial_xf = \mathcal{L}_\ast f\,,\quad (t,x,v)\in[0,\infty)\times\RR\times\RR\\
f(t=0,x,v) = f_\ri(x,v)
\end{cases}\,,
\end{equation*}
with $\mathcal{L}_\ast f = m-f$ such that $\langle\phi\,,m-f\rangle_\ast = 0$, and $f_\ri$ is the initial data. Assume the Maxwellian:
\begin{equation}
M_\ast = \frac{\rho_\ast}{\sqrt{2\pi T_\ast}}\exp{\left(-\frac{|v-u_\ast|^2}{2T_\ast}\right)}
\label{eqn:M}
\end{equation}
 and assume that $f$ decays fast enough to zero as $x \rightarrow \infty$ such that $\langle \partial_x f, f \rangle_x=0$. \\

with $u_\ast(z)$ depending on a random parameter $z$\footnote{for practical purpose the range of $z$ is controlled by $\Delta t$ but we study the general case here}. We would like to understand the regularity of the solution $f$ on $z$ direction, namely we need to find a good bound for $\partial_zf$ in certain norm.

The standard way of pursuing such analysis is simply to take the derivative of $z$ on the entire equation for a equation for $\partial_zf$, and then study the bound of $\partial_zf$. The bound could serve as a Lipschitz constant, and if small, numerical solvers that require certain regularities could be applied. Sometimes people go beyond the first derivative and seek for high differentiation, and they are all bounded in a reasonable way, spectral method could be proved to be a effective method.

If we follow that procedure, however, the difficulty would be immediate: the random variable's dependence is hidden in the operator through $\mathcal{L}_\ast$ in a very subtle way. That means taking $z$ derivative of the whole equation will produce very complicated formulation on the right hand side. We thus choose a easy way that overcomes it by shifting the coordinates. Define
\begin{equation}
g(t,x,v) = f(t,x,v-u_\ast)\,,
\end{equation}
then the equation for $g$ will have a trivial collision but a shifted transport term:
\begin{equation}\label{eqn:g}
\begin{cases}
\partial_tg + (v+u_\ast)\partial_xg = \mathcal{L}_0 g\\
g(t=0,x,v) = g_\ri(x,v) = f_\ri(x,v-u_\ast)
\end{cases}\,,
\end{equation}
with $\mathcal{L}_0$ being associated with the Maxwellian with zero velocity. The $z$ dependence of the two functions could be easily written down:
\begin{equation}
\partial_zg = \partial_zf - \partial_vf\partial_zu_\ast\,, \quad\text{or}\quad \partial_zg + \partial_vg\partial_zu_\ast = \partial_zf\,.
\end{equation}
Since $\partial_vf$ is more understood, for now we focus on studying $\partial_zg$. We take the derivative of the entire equation to get:
\begin{equation*}
\partial_t\partial_zg + (v+u_\ast)\partial_x \partial_z g +\partial_zu_\ast\partial_xg = \mathcal{L}_0 \partial_zg\,,
\end{equation*}
or by defining $h = \partial_zg$ and reorganize the equation:
\begin{equation}\label{eqn:h}
\partial_th + (v+u_\ast)\partial_xh = \mathcal{L}_0 h-\partial_zu_\ast\partial_xg \,.
\end{equation}
Immediately we see that $h$ satisfies also the linearized BGK equation but has one more negative source term $-\partial_zu_\ast\partial_xg$ compared with~\eqref{eqn:g}. To have a certain bound of $h$, we mainly need to go through two steps:
\begin{itemize}
\item[1]{bound the source term:} one needs to prove that the source term $\partial_zu_\ast\partial_xg$ is bounded;
\item[2]{bound $h$ itself:} here we need to show that a bounded $\partial_xg$ will produce a bounded $h$.
\end{itemize}
These two statements are summarized in the following two theorems.

\begin{theorem}\label{thm:bound_partial_x_g}
$\|\partial_xg\|_2$ is bounded. More specifically: $$\|\partial_xg\|_{L^2(\rd{x}\rd{v})}(t) \leq\|\partial_xg_\ri\|_{L^2(\rd{x}\rd{v})}$$.
\end{theorem}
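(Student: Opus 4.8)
The plan is to run a plain $L^2$ energy estimate on the equation satisfied by $w:=\partial_xg$. Since $u_\ast=u_\ast(z)$ does not depend on $x$ and $\mathcal{L}_0=\Pi_0-\mathrm{Id}$ acts only on the $v$ variable (the projection $\Pi_0$ being $x$-independent), the operator $\partial_x$ commutes with every term of \eqref{eqn:g}. Differentiating \eqref{eqn:g} in $x$ therefore shows that $w$ solves the \emph{same} linearized equation,
\begin{equation*}
\partial_t w + (v+u_\ast)\partial_x w = \mathcal{L}_0 w\,,
\end{equation*}
with initial datum $w(t=0,x,v)=\partial_x g_\ri(x,v)$. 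Note that \emph{no} extra source term appears here, in contrast with the $h$-equation \eqref{eqn:h}; this is precisely because the random parameter enters only through the constant-in-$x$ drift $u_\ast$, so it is annihilated by $\partial_x$.

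Next I would test this equation against $w M_0$, where $M_0$ is the zero-velocity Maxwellian attached to $\mathcal{L}_0$, and integrate in both $x$ and $v$. Writing $\|\cdot\|_2$ for the associated ($M_0$-weighted) $L^2$ norm and $\langle\cdot,\cdot\rangle_{0}$ for the corresponding inner product in $v$, this yields
\begin{equation*}
\frac{1}{2}\frac{\rd}{\rd t}\|w\|_{2}^2 + \int (v+u_\ast)\, w\,\partial_x w\, M_0\,\rd v\,\rd x = \int \langle \mathcal{L}_0 w, w\rangle_{0}\,\rd x\,.
\end{equation*}
The transport term is handled by integration by parts in $x$: using $w\,\partial_x w=\tfrac12\partial_x(w^2)$ together with the fact that $(v+u_\ast)$ is independent of $x$, it equals $\tfrac12\int (v+u_\ast)\,\partial_x(w^2)\,M_0\,\rd v\,\rd x$, which vanishes by the stated spatial-decay hypothesis on $f$ (hence on $g$ and its $x$-derivatives) as $x\to\pm\infty$. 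For the right-hand side, coercivity of $\mathcal{L}_0$ (Property~1, applied with $M_0$) gives $\langle\mathcal{L}_0 w, w\rangle_{0}\le 0$ pointwise in $x$, hence $\int\langle\mathcal{L}_0 w,w\rangle_{0}\,\rd x\le 0$.

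Combining these two facts gives $\tfrac{\rd}{\rd t}\|w\|_2^2\le 0$, and integrating in time from $0$ to $t$ delivers the claimed monotone bound $\|\partial_x g\|_2(t)\le\|\partial_x g_\ri\|_2$. I do not expect a genuine obstacle here; the content of the theorem is really the \emph{structural} observation that $\partial_x$ commutes with the full operator and produces no source. The only points needing care are technical: (i) justifying that commutation rigorously, which is immediate from $\mathcal{L}_0=\Pi_0-\mathrm{Id}$ with $\Pi_0$ a fixed $v$-projection; and (ii) justifying the vanishing of the spatial boundary term, i.e.\ making the qualitative decay assumption on $f$ quantitative enough for $w^2$ and its flux to vanish at spatial infinity. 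A fully rigorous treatment would first prove the estimate for smooth, rapidly decaying data and then extend it by a density argument.
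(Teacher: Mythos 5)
Your proposal is correct and follows essentially the same route as the paper: differentiate \eqref{eqn:g} in $x$, observe that $\partial_x$ commutes with the transport coefficient and with $\mathcal{L}_0$ so that $\partial_xg$ satisfies the same source-free equation, and then apply the standard dissipative energy estimate (which the paper isolates as Lemma~\ref{lemma:g}). The only cosmetic difference is that you test against $wM_0$ and work in the $M_0$-weighted $L^2$ norm, where the coercivity property is literally stated, whereas the paper writes the estimate directly in $L^2(\rd{x}\rd{v})$; this is arguably a touch more careful than the source.
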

\begin{proof}
To show this we first write down the equation for $\partial_xg$. Take the derivative of Equation~\eqref{eqn:g} with respect to $x$ one gets:
\begin{equation}\label{eqn:partial_x_g}
\begin{cases}
\partial_t\partial_xg + (v+u_\ast)\partial^2_xg = \mathcal{L}_0 \partial_xg\\
\partial_xg(t=0,x,v) = \partial_xg_\ri(x,v)
\end{cases}\,.
\end{equation}
Here we note that $\mathcal{L}_0$ is an operator on $\rd{v}$ and commute with $\partial_x$. It immediately suggests that $\partial_xg$ satisfies the same equation as $g$ in~\eqref{eqn:g}. Considering that the linearized BGK equation is a dissipative system and the $L_2$ norm decays in time, we cite the following lemma:
\begin{lemma}\label{lemma:g}
Suppose $g$ satisfies equation~\eqref{eqn:g}, then
\begin{equation}
\|g\|_{L^2(\rd{x}\rd{v})}(t) \leq \|g_\ri\|_{L^2(\rd{x}\rd{v})}
\end{equation}
where $g_\ri$ is the initial condition.
\end{lemma}
\begin{proof}
The proof is based on energy estimate. We multiply the equation by $g$ and integrate with respect to $x$ and $v$, then:
\begin{equation}
\langle \partial_t g\,,g\rangle_{x,v} + \langle v\partial_xg\,,g\rangle_{x,v} = \langle\mathcal{L}_0g\,,g\rangle_{x,v}\,.
\end{equation}
Since we are considering the Cauchy problem we throw the second term away. The term on the right hand side is negative considering the coercivity of the collision operator. We then immediately get $\partial_t\langle g\,,g\rangle_{x,v}\leq 0$, meaning the $L_2$ norm of $g$ decays in time and thus:
\begin{equation}
\|g\|_{L^2(\rd{x}\rd{v})}(t) \leq \|g_\ri\|_{L^2(\rd{x}\rd{v})}\,.
\end{equation}
$\hfill\Box$
\end{proof}

Apply this lemma on~\eqref{eqn:partial_x_g} we conclude with Theorem~\ref{thm:bound_partial_x_g}.
$\hfill\Box$
\end{proof}

With the boundedness of the source term $\partial_zu_\ast\partial_xg$, we could start analyzing the bound for $h$. 
\begin{theorem}\label{thm:h}
Suppose $h = \partial_zg$ satisfies~\eqref{eqn:h}, then $\|h\|_{L^2(\rd{x}\rd{v})}$ grows at most linearly:
\begin{equation}
\|h\|_{L^2(\rd x\rd v)}\lesssim C\|\partial_xg_\ri\|_{L_2(\rd{x}\rd{v})}t\,.
\end{equation}
\end{theorem}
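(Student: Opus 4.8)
The plan is to run an energy estimate on $h$ directly from~\eqref{eqn:h}, treating it as the equation for $g$ in Lemma~\ref{lemma:g} but now carrying the extra forcing term $-\partial_zu_\ast\partial_xg$. First I would multiply~\eqref{eqn:h} by $h$ and integrate in $x$ and $v$, obtaining
\begin{equation*}
\tfrac12\tfrac{\rd}{\rd t}\|h\|_{L^2(\rd x\rd v)}^2 + \langle (v+u_\ast)\partial_xh\,,h\rangle_{x,v} = \langle\mathcal{L}_0h\,,h\rangle_{x,v} - \partial_zu_\ast\langle\partial_xg\,,h\rangle_{x,v}\,.
\end{equation*}
The transport term vanishes: for each fixed $v$ the factor $v+u_\ast$ is constant in $x$, so $\langle(v+u_\ast)\partial_xh\,,h\rangle_{x,v}=\frac12\int(v+u_\ast)\partial_x(h^2)\,\rd x\rd v=0$ under the same decay-at-infinity hypothesis on $x$ used throughout. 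The collision term is nonpositive by the coercivity of $\mathcal{L}_0$ (item 1 in the list of properties of the linearized operator). The forcing term is controlled by Cauchy--Schwarz: $|\partial_zu_\ast\langle\partial_xg\,,h\rangle_{x,v}|\le|\partial_zu_\ast|\,\|\partial_xg\|_{L^2(\rd x\rd v)}\,\|h\|_{L^2(\rd x\rd v)}$.

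Next I would invoke Theorem~\ref{thm:bound_partial_x_g} to replace $\|\partial_xg\|_{L^2}(t)$ by the time-independent bound $\|\partial_xg_\ri\|_{L^2}$. Combining the three estimates gives $\frac12\frac{\rd}{\rd t}\|h\|_{L^2}^2 \le |\partial_zu_\ast|\,\|\partial_xg_\ri\|_{L^2}\,\|h\|_{L^2}$, and dividing by $\|h\|_{L^2}$ (at times where it is nonzero; at degenerate times one argues by continuity, or works with $\sqrt{\|h\|^2+\varepsilon}$ and sends $\varepsilon\to0$) yields the linear differential inequality $\frac{\rd}{\rd t}\|h\|_{L^2}\le|\partial_zu_\ast|\,\|\partial_xg_\ri\|_{L^2}$. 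Integrating from $0$ to $t$ gives $\|h\|_{L^2}(t)\le\|h(0)\|_{L^2}+|\partial_zu_\ast|\,\|\partial_xg_\ri\|_{L^2}\,t$. Since $h(0)=\partial_zg_\ri=-\partial_zu_\ast\,\partial_vf_\ri$ is of order $|\partial_zu_\ast|$ and vanishes when the initial data is prepared independently of $z$, the constant $C$ in the statement can be taken proportional to $|\partial_zu_\ast|$ (the bounded initial-data contribution being absorbed into $C$ for $t$ bounded away from $0$), which gives the claimed $\|h\|_{L^2(\rd x\rd v)}\lesssim C\|\partial_xg_\ri\|_{L^2(\rd x\rd v)}\,t$.

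I do not expect any single step to be a serious obstacle: the computation is essentially that of Lemma~\ref{lemma:g} plus a Duhamel/Cauchy--Schwarz treatment of the source, and the only bookkeeping point is the handling of $h(0)$ described above. The one genuinely structural remark is that linear-in-$t$ growth is the best this argument can deliver, and should be expected to be essentially sharp: the forcing $-\partial_zu_\ast\partial_xg$ does not decay to zero, because the coercivity of $\mathcal{L}_0$ controls only the microscopic ($v$-dependent) part of $\partial_xg$ and not its hydrodynamic component in the full $L^2(\rd x\rd v)$ norm, so a persistent order-one source drives at most linear growth — precisely what the inequality captures. A sublinear or uniform-in-$t$ bound would require hypocoercivity-type space--velocity mixing for $\partial_xg$, which goes beyond the assumptions used here and is the kind of refinement we return to in the large-space long-time regime.
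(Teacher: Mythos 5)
Your proposal is correct and follows essentially the same route as the paper: multiply~\eqref{eqn:h} by $h$, drop the transport term by the decay assumption and the collision term by coercivity, bound the source via Cauchy--Schwarz together with Theorem~\ref{thm:bound_partial_x_g}, and integrate the resulting differential inequality. The extra care you take with the transport term, the division by $\|h\|_{L^2}$, and the initial datum $h(0)$ only makes explicit what the paper leaves implicit under its $\lesssim$ (large-time) convention.
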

Here $f\lesssim g$ means $\frac{f}{g}$ is bounded by a constant in large time. We care only about the long time behavior of the solution. The reason is that after order one time, the highest order polynomial in time dominates the lower orders, and thus one only needs to specify the highest order coefficient. 

\begin{proof}
It is once again energy method. We multiply~\eqref{eqn:h} on both sides with $h$ and take the inner product in $(x,v)$:
\begin{equation}
\langle\partial_t h\,,h\rangle_{x,v} = \langle\mathcal{L}_0h\,,h\rangle_{x,v} -\partial_zu_\ast \langle\partial_xg\,,h\rangle_{x,v}\,.
\end{equation}
Considering the coercivity of $\mathcal{L}_0$ the first term on the right disappear. And we use Cauchy-Schwartz inequality to control the second term to get:
\begin{equation}
\frac{1}{2}\frac{d}{dt}\|h\|^2_{L^2(\rd{x}\rd{v})} \leq \|\partial_zu_\ast\partial_xg\|_{L^2(\rd{x}\rd{v})}\|h\|_{L^2(\rd{x}\rd{v})}\,.
\end{equation}
Assume $|\partial_zu_\ast|<C$, and it is known from Theorem~\ref{thm:bound_partial_x_g} that  $$\|\partial_xg\|_{L^2(\rd{x}\rd{v})}\leq\|\partial_xg_\ri\|_{L^2(\rd{x}\rd{v})},$$ then
\begin{equation}
\frac{d}{dt}\|h\|_{L^2(\rd{x}\rd{v})} \leq C\|\partial_xg_\ri\|_{L^2(\rd{x}\rd{v})}
\end{equation}
which leads to a linear growth of $h$: $\|h\|_{L^2(\rd{x}\rd{v})} \lesssim C\|\partial_xg_\ri\|_{L^2(\rd{x}\rd{v})}t$.
$\hfill\Box$
\end{proof}

The theorem above states the bounded of the first derivative of $g$ in $z$. One could extend it to treat higher order derivatives.
\begin{theorem}
Denote $h^{(n)} = \partial^n_zg$, then $\|h^{(n)}\|_{L^2(\rd{x}\rd{v})}$ is bounded by $t^n$:
\begin{equation}
\|h^{(n)}\|_{L^2(\rd v\rd x)}\lesssim C_nt^n\,.
\end{equation}
\end{theorem}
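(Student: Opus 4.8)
The natural idea is induction on $n$, with the base cases $n=0$ (Lemma~\ref{lemma:g}) and $n=1$ (Theorem~\ref{thm:h}) already in hand. However, a direct induction on $n$ does not close: differentiating~\eqref{eqn:g} $n$ times in $z$ and using the Leibniz rule — together with the fact that $\partial_z(v+u_\ast)=\partial_zu_\ast$ depends on neither $v$ nor $x$ — gives
\begin{equation*}
\partial_th^{(n)}+(v+u_\ast)\partial_xh^{(n)}=\mathcal{L}_0h^{(n)}-\sum_{k=1}^{n}\binom nk(\partial_z^ku_\ast)\,\partial_xh^{(n-k)}\,,
\end{equation*}
so the source involves $\partial_xh^{(n-k)}$, i.e.\ one more $x$-derivative than $h^{(n-k)}$ itself. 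The plan is therefore to prove the stronger statement that for all $m\ge0$ and all $j\ge0$ the function $w^{(m,j)}:=\partial_x^{\,j}\partial_z^{\,m}g$ obeys $\|w^{(m,j)}\|_{L^2(\rd x\rd v)}\lesssim C_{m,j}\,t^m$, and then to specialize to $j=0$, $m=n$. This needs two standing assumptions, both benign in the intended application: $g_\ri$ is regular enough that $\partial_x^{\,j}g_\ri\in L^2(\rd x\rd v)$ for every $j$, and $u_\ast(z)$ has all $z$-derivatives bounded, $|\partial_z^ku_\ast|\le C$.

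Since $\partial_x$ commutes with $\partial_t$, $\partial_z$, $\mathcal{L}_0$, and with multiplication by $v+u_\ast$ (which is $x$-independent), applying $\partial_x^{\,j}$ to the displayed equation shows that $w^{(m,j)}$ solves the linearized BGK equation with source $-\sum_{k=1}^{m}\binom mk(\partial_z^ku_\ast)\,w^{(m-k,\,j+1)}$. Running the energy estimate exactly as in Lemma~\ref{lemma:g} and Theorem~\ref{thm:h} — multiply by $w^{(m,j)}$, integrate in $x$ and $v$, drop the transport term using the decay at infinity, and kill the collision term by coercivity of $\mathcal{L}_0$ — and then applying Cauchy--Schwarz to each source term gives
\begin{equation*}
\frac{\rd}{\rd t}\big\|w^{(m,j)}\big\|_{L^2(\rd x\rd v)}\;\le\; C\sum_{k=1}^{m}\binom mk\,\big\|w^{(m-k,\,j+1)}\big\|_{L^2(\rd x\rd v)}\,.
\end{equation*}

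The induction on $m$ then runs as follows. For $m=0$, $w^{(0,j)}=\partial_x^{\,j}g$ solves the homogeneous equation~\eqref{eqn:g}, so Lemma~\ref{lemma:g} bounds it by $\|\partial_x^{\,j}g_\ri\|_{L^2(\rd x\rd v)}=O(1)=O(t^0)$. Assuming the claim for all $m'<m$ and all $j$, every term on the right-hand side above is $\lesssim t^{m-k}\le t^{m-1}$, hence $\frac{\rd}{\rd t}\|w^{(m,j)}\|\lesssim t^{m-1}$; integrating in $t$ (the initial value $\|\partial_x^{\,j}g_\ri\|$ contributes only an $O(1)$ term, absorbed in large time) yields $\|w^{(m,j)}\|_{L^2(\rd x\rd v)}\lesssim t^m$, which closes the induction. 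Taking $j=0$, $m=n$ gives the theorem, with $C_n$ an explicit constant built from $C$, the binomial coefficients, and the norms $\|\partial_x^{\,k}g_\ri\|_{L^2(\rd x\rd v)}$, $k\le n$.

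The one genuine obstacle is the structural observation in the first paragraph: the estimate is not self-contained at a fixed level $n$, so one is forced to propagate all $x$-derivatives simultaneously; once that is accepted the argument is routine double bookkeeping. A secondary, technical point — suppressed here as elsewhere in the paper — is justifying that $g$ is smooth enough in $(t,x,v,z)$ for the repeated differentiations and the interchange of $\partial_x$ and $\partial_z$ to be legitimate, which one would obtain from a standard well-posedness and regularity theory for the linear system in suitable weighted Sobolev spaces.
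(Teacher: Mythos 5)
Your proposal is correct and follows essentially the same route as the paper: differentiate~\eqref{eqn:g} $n$ times in $z$ via the Leibniz rule, run the energy estimate with coercivity of $\mathcal{L}_0$ and Cauchy--Schwarz on the source, and induct on the order of the $z$-derivative. The one place you go beyond the paper is worth noting: the paper handles the extra $x$-derivative in the source by asserting informally that ``$\partial_xh$ and $h$ satisfy the same equation, so $\partial_xh$ is bounded by the same order,'' whereas your strengthened induction hypothesis on $w^{(m,j)}=\partial_x^{\,j}\partial_z^{\,m}g$ for all $j$ is exactly the rigorous version of that extrapolation (the cascade $\partial_xh^{(m-k)}\to\partial_x^2h^{(m-k-1)}\to\cdots$ forces one to propagate all $x$-derivatives simultaneously, at the price of requiring $\partial_x^{\,j}g_\ri\in L^2$ for all $j\le n$), so your write-up is, if anything, tighter than the paper's.
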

Again we are mainly interested in the long time behaviour of the solution so it suffices to consider only the highest order in time.
\begin{proof}
The proof is based on induction. According to the definition, $h^{(0)} = g$ and Lemma~\ref{lemma:g} guarantees that $h^{(0)}$ is bounded by a constant, and $h^{(1)}$ is the $h$ in Theorem~\ref{thm:h} and we have seen it is bounded by a linear growth. We thus perform math induction, assuming $h^{(k-1)}$ is bounded by $t^{k-1}$ we show that $h^{(k)}$ is bounded by $t^{k}$.

We first take the $k$-th order derivative of the equation~\eqref{eqn:g}:
\begin{equation*}
\partial_t\partial_z^kg + \sum_{n=0}^k{k\choose n}\partial_z^n(v+u_\ast)\partial_x\partial_z^{k-n}g = \mathcal{L}_0 \partial_z^k g\,,
\end{equation*}
or moving the source term to the right:
\begin{equation*}
\partial_th^{(k)} + (v+u_\ast)\partial_xh^{(k)} = \mathcal{L}_0 h^{(k)} - \sum_{n=1}^k{k\choose n}\partial_z^nu_\ast\partial_xh^{(k-n)}\,.
\end{equation*}
According to our assumption, $\partial_z^nu_\ast$ is bounded by a constant, one has:
\begin{equation*}
\begin{split}
\langle \partial_t h^{(k)}\,,h^{(k)}\rangle_{x,v} + \langle (v+u_\ast)\partial_xh^{(k)}\,,h^{(k)}\rangle_{x,v} &= \langle \mathcal{L}_0 h^{(k)}\,,h^{(k)}\rangle_{x,v} \\&- \sum_{n=1}^k{k\choose n}\langle \partial_z^nu_\ast\partial_xh^{(k-n)}\,,h^{(k)}\rangle_{x,v}\,.
\end{split}
\end{equation*}
which means:
\begin{equation}\label{eqn:bound_hk}
\frac{1}{2} \frac{d}{dt}\|h^{(k)}\|^2_{L_2(\rd{x}\rd{v})} \leq C_k\|\partial_xh^{(k-n)}\|_{L_2(\rd{x}\rd{v})}\|h^{(k)}\|_{L_2(\rd{x}\rd{v})}\,,
\end{equation}
where we used the Cauchy boundary condition, the coercivity of $\mathcal{L}_0$, and Cauchy-Schwartz inequality. By our assumption $h^{(k-1)}$ is bounded by $t^{k-1}$, since $\partial_xh$ and $h$ satisfies the same equation, it can be extrapolated as $\partial_xh$ being bounded by the same order, and then putting it back into~\eqref{eqn:bound_hk}, we have:
\begin{equation}
\|h^{(k)}\|_{L_2(\rd{x}\rd{v})}\lesssim t^{k}\,,
\end{equation}
which finishes the math induction loop, and complete the proof.
$\hfill\Box$
\end{proof}

\section{Variation in $T$}
In this section we want to study the solution's response to the deviations in $T_\ast$. Namely, we assume the Maxwellian defined in~\eqref{eqn:M} has its $T_\ast(z)$ depending on a random parameter $z$. Once again, in order to get rid of the complicated dependence of $\mathcal{L}_\ast$ on $z$, we perform change of variable and define 
\begin{equation}
p(t,x,v) = f\left(t,x, \frac{v}{\sqrt{T_\ast}}\right)\,.
\end{equation}
Then $p$ satisfies the equation
\begin{equation}
\begin{cases} \partial_t p + \sqrt{T_\ast} v \partial_xp = \mathcal{L}_1 p \\ p(t=0, x,v) = p_\ri(x,v) = f_\ri\left(x, \frac{v}{\sqrt{T_\ast}} \right) \end{cases}\,,
\label{eqn:p}
\end{equation}
where $\mathcal{L}_1$ is the collision operator associated with the Maxwellian with temperature one, and $p_\ri$ is the initial data. Again we focus on studying $\partial_z p$ instead of $\partial_z f$. Denote $q=\partial_z p$, we obtain its governing equation by taking the derivative in $z$ of equation~\eqref{eqn:p}. Rearranging the terms we have:
\begin{equation}
\partial_t q + \sqrt{T_\ast} v \partial_x q = \mathcal{L}_1 q - \partial_z(\sqrt{T_\ast}) v \partial_x p \,.
\label{eqn:q}
\end{equation}
This equation has the same structure as equation \eqref{eqn:h}: it is a linearized kinetic equation with a source term, and for the boundedness of $q$, we simply need to show the boundedness of $v \partial_x p$. In the previous section we showed that the source term $\partial_xg$ satisfies the same equation as $g$ does and thereby was able to give the bound. This is no longer the case here. Instead of writing the equation we write:
\begin{equation}
v \partial_x p = \frac{\mathcal{L}_1 p - \partial_t p}{\sqrt{T_\ast}}\,,
\label{eqn:v_partial_x_p}
\end{equation}
and are able to prove the following:
\begin{theorem}
Suppose $q= \partial_z p$ satisfies \eqref{eqn:q}, then $|| q||_{L^2(\rd{x}\rd{v})}$ grows at most linearly:
\begin{equation*}
||q||_{L^2(\rd{x}\rd{v})} \lesssim C \left( || p_i||_{L^2(\rd{x}\rd{v})} + || \partial_t p_i||_{L^2(\rd{x}\rd{v})} \right) t
\end{equation*}
\end{theorem}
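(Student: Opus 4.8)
The plan is to follow the template of Theorem~\ref{thm:h}, the only genuine change being how the source term is estimated. First I would run the energy estimate directly on \eqref{eqn:q}: multiply by $q$ and integrate over $(x,v)$ against $M_1$. Since $\sqrt{T_\ast}$ is independent of $x$, the modified transport term $\sqrt{T_\ast}v\partial_x q$ is still skew-symmetric and drops out for the Cauchy problem by the assumed decay in $x$; the collision contribution $\langle\mathcal{L}_1 q,q\rangle_{x,v}$ is nonpositive by coercivity of $\mathcal{L}_1$. With Cauchy--Schwarz on the remaining term and the assumption $|\partial_z\sqrt{T_\ast}|<C$, this produces
\begin{equation*}
\frac{d}{dt}\|q\|_{L^2(\rd x\rd v)} \le C\,\|v\partial_x p\|_{L^2(\rd x\rd v)}\,,
\end{equation*}
so the whole statement reduces to a uniform-in-time bound on $\|v\partial_x p\|_{L^2}$; integrating in $t$ then yields linear growth, the fixed initial contribution $\|q(0)\|=\|\partial_z p_\ri\|$ being negligible in the $\lesssim$ sense for large time.

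Second, this is the point where the trick of Section~\ref{} — ``the source solves the same equation'' — is unavailable, because $v\partial_x p$ does not satisfy a closed linearized BGK equation (multiplication by $v$ commutes with neither the transport operator nor $\mathcal{L}_1$). Instead I would use the identity \eqref{eqn:v_partial_x_p}, $v\partial_x p = (\mathcal{L}_1 p - \partial_t p)/\sqrt{T_\ast}$, so that, assuming $T_\ast$ is bounded below away from zero, $\|v\partial_x p\|_{L^2}\le (\|\mathcal{L}_1 p\|_{L^2}+\|\partial_t p\|_{L^2})/\sqrt{T_\ast}$. The first piece is immediate: $\mathcal{L}_1 = \Pi_1 - I$ with $\Pi_1$ the $L^2(M_1\rd v)$-orthogonal projection onto $\Span\{1,v,v^2\}$ (cf.\ \eqref{eqn:L}), so $\mathcal{L}_1$ is minus an orthogonal projection and $\|\mathcal{L}_1 p\|_{L^2}\le\|p\|_{L^2}$; moreover $p$ itself solves the linearized BGK equation \eqref{eqn:p}, which has exactly the form covered by Lemma~\ref{lemma:g} (the constant factor $\sqrt{T_\ast}$ in the transport term does not affect the energy estimate), hence $\|p\|_{L^2(\rd x\rd v)}(t)\le\|p_\ri\|_{L^2(\rd x\rd v)}$.

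Third, for $\|\partial_t p\|_{L^2}$ I would differentiate \eqref{eqn:p} in $t$; because neither $\sqrt{T_\ast}$ nor $\mathcal{L}_1$ depends on $t$, the function $\partial_t p$ satisfies the \emph{same} homogeneous equation as $p$, so Lemma~\ref{lemma:g} applies again and gives $\|\partial_t p\|_{L^2(\rd x\rd v)}(t)\le\|\partial_t p(0)\|_{L^2(\rd x\rd v)}=\|\partial_t p_\ri\|_{L^2(\rd x\rd v)}$ (if one prefers, $\partial_t p_\ri$ can be rewritten from the equation at $t=0$ as $\mathcal{L}_1 p_\ri - \sqrt{T_\ast}v\partial_x p_\ri$). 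Combining the three pieces, $\|v\partial_x p\|_{L^2}\lesssim \|p_\ri\|_{L^2}+\|\partial_t p_\ri\|_{L^2}$ uniformly in $t$, and inserting this into the energy inequality of the first step closes the argument.

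I expect the main difficulty to be conceptual rather than computational: recognizing that $v\partial_x p$ must be handled through the equation itself via \eqref{eqn:v_partial_x_p} rather than by deriving a transport equation for it, and then noticing that the resulting term $\partial_t p$ obeys the decay lemma only because the coefficients are $t$-independent. The price paid is the extra time derivative — hence the appearance of $\|\partial_t p_\ri\|$ in the bound — together with the mild standing assumption that $T_\ast$ is bounded below away from zero so that $1/\sqrt{T_\ast}$ contributes only a harmless constant. Extending to higher $z$-derivatives $\partial_z^n p$ should then proceed by induction exactly as in the previous section, each differentiation producing a source of the form (bounded coefficient)$\times v\partial_x(\partial_z^{k-n}p)$ controlled by the same identity, yielding growth of order $t^n$.
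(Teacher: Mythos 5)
Your proposal is correct and follows essentially the same route as the paper's proof: it uses the identity \eqref{eqn:v_partial_x_p} to trade $v\partial_x p$ for $\mathcal{L}_1 p - \partial_t p$, bounds $\|\mathcal{L}_1 p\|_{L^2}$ by $\|p\|_{L^2}$, and exploits that $p$ and $\partial_t p$ satisfy the same decaying equation before integrating the energy inequality in time. Your observation that $\mathcal{L}_1 = -(I-\Pi_1)$ is minus an orthogonal projection, so that $\|\mathcal{L}_1 p\|\le\|p\|$ immediately, is in fact a cleaner version of the paper's explicit basis computation.
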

\begin{proof}
We once again use the energy method. We insert \eqref{eqn:v_partial_x_p} into \eqref{eqn:q} and multiply the obtained equation with $q$ and take the inner product in $(x,v)$:
\begin{equation}
 \langle \partial_t q\,,q\rangle_{x,v} = \langle \mathcal{L}_1 q\,,q\rangle_{x,v} - \frac{\partial_z \sqrt{T_\ast}}{\sqrt{T_\ast}} \langle (\mathcal{L}_1p - \partial_t p)\,,q\rangle_{x,v}.
\end{equation}
Due to the coercivity of $\mathcal{L}_1$ the first term on the right disappears. For the second term on the right we use Cauchy-Schwarz and the triangle inequality
\begin{equation}
\frac{1}{2} \frac{d}{dt} ||q||^2_{L^2(\rd{x}\rd{v})} \leq \bigg \vert\frac{\partial_z \sqrt{T_\ast}}{\sqrt{T_\ast}} \bigg \vert \left( || \mathcal{L}_1 p ||_{L^2(\rd{x}\rd{v})} + || \partial_t p ||_{L^2(\rd{x}\rd{v})} \right) ||q||_{L^2(\rd{x}\rd{v})}
\end{equation}
We assume $\bigg \vert\frac{\partial_z \sqrt{T_\ast}}{\sqrt{T_\ast}} \bigg \vert < C$. 
 Similar to $\Pi_x f$ in \eqref{eqn:L}, $\Pi_1 g$ can be represented as $$\Pi_1 f = \sum_{i=0}^{d+1} \langle \chi_i^0, g \rangle_1 \chi_i^0$$ with orthonormal basis fuctions $\chi_i^0$, where $\langle \cdot \rangle_1$ denotes integration with respect to $v$ with the weight $M_1$. Then  $||\mathcal{L}_1 p||_{L^2(M_1 \rd{x}\rd{v})}$ can be estimated  by above using the explicit expression of $\mathcal{L}_1$  and Cauchy-Schwartz inequality by
\begin{equation}
\begin{split}
\langle \mathcal{L}_1 p , \mathcal{L}_1 p \rangle_{x,v,1} &= \langle \sum_{i=1}^{d+1} \langle \chi_i, p \rangle_{x,v,1}~ \chi_i - p , \sum_{j=1}^{d+1} \langle \chi_j, p \rangle_{x,v,1}~ \chi_j - p \rangle_{x,v,1} \\ &= \sum_{i=1}^{d+1} (\langle \chi_i , p \rangle_{x,v,1})^2 - \langle p,p\rangle_{x,v,1} \leq (d+1) \langle p,p \rangle_{x,v,1} - \langle p, p\rangle_{x,v,1}\\ & = d \langle p,p\rangle_{x,v,1}
\end{split}
\end{equation}
Since the norm $|| \cdot ||_{L^2(M_1\rd{x}\rd{v})}$ is equivalent to $|| \cdot ||_{L^2(\rd{x}\rd{v})}$, the term $|| \mathcal{L}_1 p||_{L^2(\rd{x}\rd{v})}$ is also bounded by $C ||p||_{L^2(\rd{x}\rd{v})}$.


 Realizing that  $\partial_t p$ satisfies the same equation as $p$ does, according to Lemma~\eqref{lemma:p}, their $L_2$ norm decrease in time, meaning:
\begin{align}
\frac{d}{dt}|| q||_{L^2(\rd{x}\rd{v})} &\leq C \left( ||  p||_{L^2(\rd{x}\rd{v})}(t) + || \partial_t p||_{L^2(\rd{x}\rd{v})}(t) \right) 
\\ &\leq C \left( ||  p_i||_{L^2(\rd{x}\rd{v})} + || \partial_t p_i||_{L^2(\rd{x}\rd{v})} \right)\,,
\end{align}
which leads to a linear growth of $q$:
 \begin{equation}
 || q||_{L^2(\rd{x}\rd{v})} \lesssim C \left( || p_i||_{L^2(\rd{x}\rd{v})} + || \partial_t p_i||_{L^2(\rd{x}\rd{v})} \right) t\,.
\end{equation}
which concludes the proof.
$\hfill\Box$
\end{proof}

The lemma used in the theorem is stated in the following:
\begin{lemma}\label{lemma:p}
Suppose $p$ satisfies equation~\eqref{eqn:p}, then
\begin{equation}
\|p\|_{L^2(\rd{x}\rd{v})}(t) \leq \|p_\ri\|_{L^2(\rd{x}\rd{v})}
\end{equation}
where $p_\ri$ is the initial condition.
\end{lemma}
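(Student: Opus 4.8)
The plan is to repeat, essentially verbatim, the energy argument used in Lemma~\ref{lemma:g}, the only differences being that the constant transport speed $v+u_\ast$ there is replaced by $\sqrt{T_\ast}\,v$ here, and the collision operator $\mathcal{L}_0$ is replaced by $\mathcal{L}_1$, which is coercive with respect to the inner product $\langle\cdot,\cdot\rangle_1$ weighted by the unit-temperature Maxwellian $M_1$. First I would multiply equation~\eqref{eqn:p} by $p$ and integrate in $x$ and $v$ against the weight $M_1$, obtaining
\[
\langle\partial_t p,p\rangle_{x,v,1}+\sqrt{T_\ast}\,\langle v\,\partial_x p,p\rangle_{x,v,1}=\langle\mathcal{L}_1 p,p\rangle_{x,v,1}\,.
\]

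Next I would dispose of the two right-hand-side contributions. The transport term vanishes: since $T_\ast=T_\ast(z)$ does not depend on $x$ (nor on $t$), the factor $\sqrt{T_\ast}$ pulls out of the $x$-integration, and then $\langle v\,\partial_x p,p\rangle_{x,v,1}=\tfrac12\int v\,M_1\big(\int\partial_x(p^2)\,\rd x\big)\rd v=0$ by the decay-at-infinity assumption on $x$ (the analogue of $\langle\partial_x f,f\rangle_x=0$ used throughout the paper). The collision term is nonpositive by the coercivity of $\mathcal{L}_1$ (item~1 of the listed properties of the linearized operator, equivalently the Hermite representation $\mathcal{L}_1 p=-\sum_{m\ge d+2}\langle\chi_m,p\rangle_1\chi_m$). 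Hence $\frac{d}{dt}\|p\|^2_{L^2(M_1\rd x\rd v)}\le 0$, so that $\|p\|_{L^2(M_1\rd x\rd v)}(t)\le\|p_\ri\|_{L^2(M_1\rd x\rd v)}$, and the stated inequality in the $L^2(\rd x\rd v)$ norm follows from the equivalence of the two norms already invoked in the proof of the preceding theorem.

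I do not expect a genuine obstacle here: this is a routine energy estimate. The only points that require care are the bookkeeping of the Gaussian weight — the dissipation is natural in the $M_1$-weighted $L^2$ norm rather than in the plain $L^2(\rd x\rd v)$ norm, so one either states the lemma in the weighted norm or passes through norm equivalence — and the observation that, although $\sqrt{T_\ast}$ is a random quantity depending on $z$, it is a constant in $(t,x)$, so the skew-symmetric transport term still integrates to zero exactly as in the deterministic setting.
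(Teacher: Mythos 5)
Your argument is exactly the one the paper intends: the paper's proof of this lemma consists of the single sentence ``The proof is analogous to the proof of Lemma 3.1,'' and your energy estimate is precisely that analogy carried out, with the added (and welcome) care about the $M_1$-weighted norm versus the plain $L^2(\rd x\rd v)$ norm. No gap; this matches the paper's approach.
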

\begin{proof}
The proof is analogous to the proof of Lemma 3.1.
$\hfill\Box$
\end{proof}

We can also extend the result of Theorem 4.1 to derivatives of higher orders. This is done in the following theorem
\begin{theorem}
Suppose $q^{(n)}:= \partial_z^n p$ satisfies
\begin{equation}
 \partial_t q^{(n)} + \sum_{k=0}^n \begin{pmatrix}
 ~n~ \\ ~k~
\end{pmatrix} \partial_z^{(n-k)} \left(\sqrt{T_\ast}\right) v \partial_x q^{(k)} = \mathcal{L}_1 q^{(n)} 
\label{eqn:q^n}
\end{equation}
for all $n \in \mathbb{N}_0$. Then 
\begin{equation*}
||q^{(N)}||_{L^2(\rd{x}\rd{v})} \lesssim C_N ~ t^N \quad \text{for all} ~ N\leq n
\end{equation*}
 where $C_N$ depends on $|| \mathcal{L}_1^k \partial_t^l q^{(0)}||_{L^2(\rd{x}\rd{v})}, ~ k,l \leq N$.
\end{theorem}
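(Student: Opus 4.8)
I would run an induction on $N$ that parallels the proof of Theorem~4.1, with the algebraic identity~\eqref{eqn:v_partial_x_p} replaced by its higher-order analogue. It is convenient to prove the slightly stronger assertion: for every $b\in\mathbb{N}_0$ and every $N\le n$,
\begin{equation*}
\|\partial_t^b q^{(N)}\|_{L^2(\rd x\rd v)}(t)\lesssim C_{N,b}\,t^{N},
\end{equation*}
where the constants depend only on the norms $\|\mathcal{L}_1^k\partial_t^l q^{(0)}\|_{L^2(\rd x\rd v)}$, $k,l\le N$, and on the standing bounds for $1/\sqrt{T_\ast}$ and its $z$-derivatives. The base case $N=0$ is Lemma~\ref{lemma:p} applied to $\partial_t^b q^{(0)}=\partial_t^b p$, which solves the same equation~\eqref{eqn:p} as $p$ since $\sqrt{T_\ast}$ and $\mathcal{L}_1$ do not depend on $t$. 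Granting this stronger assertion, the theorem itself follows at once, using in addition the operator bound $\|\mathcal{L}_1 w\|_{L^2}\lesssim\|w\|_{L^2}$ established in the proof of Theorem~4.1.

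First I would record the higher-order analogue of~\eqref{eqn:v_partial_x_p}, obtained by isolating the $k=n$ term in~\eqref{eqn:q^n}:
\begin{equation}
v\partial_x q^{(n)}=\frac{1}{\sqrt{T_\ast}}\left(\mathcal{L}_1 q^{(n)}-\partial_t q^{(n)}-\sum_{k=0}^{n-1}\binom{n}{k}\partial_z^{(n-k)}\!\left(\sqrt{T_\ast}\right)v\partial_x q^{(k)}\right).
\end{equation}
Iterating this identity downward in the index --- it terminates at $v\partial_x q^{(0)}=(\mathcal{L}_1 q^{(0)}-\partial_t q^{(0)})/\sqrt{T_\ast}$ --- expresses $v\partial_x q^{(k)}$, for each $k\le N-1$, as a finite linear combination, with coefficients polynomial in $1/\sqrt{T_\ast}$ and in the bounded factors $\partial_z^{l}\sqrt{T_\ast}$, of the terms $\mathcal{L}_1 q^{(j)}$ and $\partial_t q^{(j)}$ with $j\le k$. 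By the inductive hypothesis together with $\|\mathcal{L}_1 w\|_{L^2}\lesssim\|w\|_{L^2}$, each such term has $L^2$ norm $\lesssim t^{j}\le t^{k}$, whence $\|v\partial_x q^{(k)}\|_{L^2(\rd x\rd v)}(t)\lesssim t^{k}$ for $k\le N-1$; the same manipulation applied to $\partial_t$-differentiated copies of~\eqref{eqn:q^n} gives the analogous bound for $v\partial_x\partial_t^b q^{(k)}$.

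I would then close the induction by an energy estimate on~\eqref{eqn:q^n} with the $k=N$ term moved to the left:
\begin{equation}
\partial_t q^{(N)}+\sqrt{T_\ast}\,v\partial_x q^{(N)}=\mathcal{L}_1 q^{(N)}-\sum_{k=0}^{N-1}\binom{N}{k}\partial_z^{(N-k)}\!\left(\sqrt{T_\ast}\right)v\partial_x q^{(k)}.
\end{equation}
Pairing with $q^{(N)}$ in $\langle\cdot,\cdot\rangle_{x,v}$, the transport term vanishes by the decay-in-$x$ assumption, $\langle\mathcal{L}_1 q^{(N)},q^{(N)}\rangle_{x,v}\le 0$ by coercivity, and Cauchy--Schwarz on the source together with the previous paragraph gives $\frac{1}{2}\frac{d}{dt}\|q^{(N)}\|_{L^2(\rd x\rd v)}^2\lesssim C_N\,t^{N-1}\|q^{(N)}\|_{L^2(\rd x\rd v)}$; dividing by the norm and integrating in $t$ yields $\|q^{(N)}\|_{L^2(\rd x\rd v)}\lesssim C_N\,t^{N}$. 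To carry the strengthened hypothesis to level $N$, I would repeat this for $\partial_t^b q^{(N)}=\partial_z^N\partial_t^b p$, which satisfies~\eqref{eqn:q^n} with every $q^{(m)}$ replaced by $\partial_t^b q^{(m)}$ (again by $t$-independence of the coefficients), the lower-order sources being controlled by the inductive hypothesis; for $N=1$ this step recovers Theorem~4.1.

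The routine pieces --- commuting $\partial_x$ and $\partial_t$ with the operators, Cauchy--Schwarz, the integration in $t$, and the bound $\|\mathcal{L}_1 w\|_{L^2}\lesssim\|w\|_{L^2}$ --- I would handle exactly as in the proofs of Theorem~4.1 and Lemma~\ref{lemma:g}. The step I expect to demand the most care is the unwinding of the recursion for $v\partial_x q^{(n)}$: I need to verify that it terminates producing only $\mathcal{L}_1$- and $\partial_t$-derivatives of the $q^{(j)}$, with no uncontrolled $x$-derivative re-entering; that the accumulated coefficients stay finite, which is where one needs $T_\ast(z)$ bounded away from zero so that $1/\sqrt{T_\ast}$ and its $z$-derivatives are bounded; and that, after tracing the nested induction, the constant $C_N$ depends only on the claimed quantities $\|\mathcal{L}_1^k\partial_t^l q^{(0)}\|_{L^2(\rd x\rd v)}$, $k,l\le N$.
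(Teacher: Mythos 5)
Your proposal is correct and follows essentially the same route as the paper: induction on the order of the $z$-derivative, recursive elimination of the $v\partial_x q^{(k)}$ terms via the higher-order analogue of~\eqref{eqn:v_partial_x_p} until only $\mathcal{L}_1 q^{(j)}$ and $\partial_t q^{(j)}$ remain, followed by the usual energy estimate. Your explicit strengthening of the induction hypothesis to cover $\partial_t^b q^{(N)}$ is a welcome clarification of what the paper states only informally (that $\partial_t q^{(N)}$ satisfies the same equation and obeys the same bound), but it is not a different argument.
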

\begin{proof}
We proof the statement via induction. For $n=0$ and $n=1$ we proved it in Theorem 4.1 and Lemma 4.1. We have shown in Lemma 4.1 that if $q^{(0)}$ satisfies \eqref{eqn:p}, then $||q^{(0)}||_{L^2(\rd{x}\rd{v})}$ is bounded by $||q^0_i||_{L^2(\rd{x}\rd{v})}$, and in Theorem 4.1 if $q^{(1)}$ satisfies \eqref{eqn:q}, we can replace $v \partial_x q^{(0)}$ by \eqref{eqn:v_partial_x_p}. We can show that $\mathcal{L}_1 q^{(0)}$ is bounded in ${L^2(\rd{x}\rd{v})}$ by $||p||_{L^2(\rd{x}\rd{v})}$ and $\partial_t q^{(0)}$ also satisfy \eqref{eqn:p} and can deduce that $||q^{(1)}||_{L^2(\rd{x}\rd{v})}$ is bounded by $C(||  q^{(0)}_i||_{L^2(\rd{x}\rd{v})} + ||\partial_t q^{(0)}_i||_{L^2(\rd{x}\rd{v})})t$, see the proof of Theorem 4.1. 
Assume now that the statement is true for a fixed $n \in \mathbb{N}$. We want to deduce that it is true for $n+1$. If $q^{(n+1)}$ satisfies 
\begin{equation}
\partial_t q^{(n+1)} + \sum_{k=0}^{n+1} \begin{pmatrix}
n+1 \\ k
\end{pmatrix} \partial_z^{(n+1-k)} \left(\sqrt{T_\ast}\right) v \partial_x q^{(k)} = \mathcal{L}_1 q^{(n+1)}
\end{equation}
we can replace $v \partial_x q^{(n)}$ in terms of $\partial_t q^{(n)}, \mathcal{L}_1 q^{(n)}, v \partial_x q^{(N)}, N <n$ from the equation for $q^{(n)}$ given by \eqref{eqn:q^n}. In the resulting equation we can replace $v \partial_x q^{(n-1)}$ in terms of $\partial_t q^{(n-1)}, \mathcal{L}_1 q^{(n-1)}, v \partial_x q^{(N)}, N<n-1$ from the equation for $q^{(n-1)}$. Next, we can replace $v \partial_x q^{(n-2)}$ from the equation for $q^{(n-2)}$ and so on until we do not have terms with $v \partial_x q^{(k)}$ for some $k<n+1$ any more. So all in all, we obtain an equation of the form
\begin{equation}
\partial_t q^{(n+1)} + A(\partial_t q^{(0)}, \mathcal{L}_1 q^{(0)}, \dots , \partial_t q^{(n)}, \mathcal{L}_1 q^{(n)}, T_\ast ) + v \partial_x q^{(n+1)} = \mathcal{L}_1 q^{(n+1)}
\label{eqn:q^n+1}
\end{equation}
where $A$ is a linear combination of $\partial_t q^{(0)}, \mathcal{L}_1 q^{(0)}, \dots , \partial_t q^{(n)}, \mathcal{L}_1 q^{(n)}$ with coefficients depending on $T_\ast$ of the form 
\begin{equation}
\frac{(\partial_z^a(\sqrt{T_\ast}))^b}{\sqrt{T_\ast}^c} \quad \text{for} ~ a,b,c \leq n+1
\end{equation}
We can show that $\partial_t q^{(N)}, N \leq n$ satisfy the same equation as $q^{(N)}$ similar as it is done in section 3 for $\partial_x g$ and $g$ and $\mathcal{L}_1 q^{(N)}, N \leq n$ is bounded in ${L^2(\rd{x}\rd{v})}$ by $|| q^{(N)}||_{L^2(\rd{x}\rd{v})}$, and that they are bounded in $L^2(\rd{x}\rd{v})$ by $C_N t^N$ where $C_N$ depends on $||\mathcal{L}_1^k \partial_t^l q^{(0)}||_{L^2(\rd{x}\rd{v})}, k,l \leq N$ due to the induction assumption. Finally, by the energy method we can deduce from \eqref{eqn:q^n+1} that $q^{(n+1)}$ is bounded in $L^2(\rd{x}\rd{v})$ by $C_{n+1} t^{n+1}$.
$\hfill\Box$
\end{proof}


\begin{thebibliography}{10}

\bibitem{GPZ15}
G.~Albi, L.~Pareschi, and M.~Zanella.
\newblock Uncertainty quantification in control problems for flocking models.
\newblock {\em Math. Probl. Eng.}, (ID 850124):14, 2015.

\bibitem{BNT04}
I.~Babu\v{s}ka, F.~Nobile, and R.~Tempone.
\newblock {G}alerkin finite element approximations of stochastic elliptic
  partial differential equations.
\newblock {\em {SIAM} J. Numer. Anal.}, 42(2):800--825, 2004.

\bibitem{BNT07}
I.~Babu\v{s}ka, F.~Nobile, and R.~Tempone.
\newblock A stochastic collocation method for elliptic partial differential
  equation with random input data.
\newblock {\em {SIAM} J. Numer. Anal.}, 45(3):1005--1034, 2007.

\bibitem{babuska_stochastic_2007}
I.~Babu\v{s}ka, F.~Nobile, and R.~Tempone.
\newblock A stochastic collocation method for elliptic partial differential
  equations with random input data.
\newblock {\em {SIAM} J. Numer. Anal.}, 45(3):1005--1034, Jan. 2007.

\bibitem{Barth2011}
A.~Barth, C.~Schwab, and N.~Zollinger.
\newblock Multi-level {M}onte {C}arlo finite element method for elliptic {PDE}s
  with stochastic coefficients.
\newblock {\em Numerische Mathematik}, 119(1):123--161, 2011.

\bibitem{majda_gpc}
M.~Branicki and A.~J. Majda.
\newblock Fundamental limitations of polynomial chaos for uncertainty
  quantification in systems with intermittent instabilities.
\newblock {\em Communications in Mathematical Sciences}, 11(1):55 -- 103, 2013.

\bibitem{Charrier_MLMC}
J.~Charrier, R.~Scheichl, and A.~L. Teckentrup.
\newblock Finite element error analysis of elliptic {PDE}s with random
  coefficients and its application to multilevel {M}onte {C}arlo methods.
\newblock {\em {SIAM} J. Numer. Anal.}, 51(1):322--352, 2013.

\bibitem{chkifa_breaking_2014}
A.~Chkifa, A.~Cohen, and C.~Schwab.
\newblock Breaking the curse of dimensionality in sparse polynomial
  approximation of parametric {PDEs}.
\newblock {\em Journal de Math{\'e}matiques Pures et Appliqu{\'e}es}, Apr.
  2014.

\bibitem{cohen_convergence_2010}
A.~Cohen, R.~DeVore, and C.~Schwab.
\newblock Convergence rates of best {N}-term {G}alerkin approximations for a
  class of elliptic {sPDEs}.
\newblock {\em Foundations of Computational Mathematics}, 10(6):615--646, 2010.

\bibitem{cohen_analytic_2011}
A.~Cohen, R.~Devore, and C.~Schwab.
\newblock Analytic regularity and polynomial approximation of parametric and
  stochastic elliptic {PDE}'s.
\newblock {\em Analysis and Applications}, 9(01):11--47, 2011.

\bibitem{DP_uncertaity}
B.~Despres and B.~Perthame.
\newblock Uncertainty propagation; intrusive kinetic formulations of scalar
  conservation laws.
\newblock {\em SIAM/ASA Journal on Uncertainty Quantification}, 4(1):980--1013,
  2016.

\bibitem{XK02}
D.Xiu and G.~Karniadakis.
\newblock The {W}iener-{A}skey polynomial chaos for stochastic differential
  equations.
\newblock {\em {SIAM} J. Sci. Comput.}, 24(2):619--644, 2002.

\bibitem{fishman2013monte}
G.~Fishman.
\newblock {\em Monte Carlo: Concepts, Algorithms, and Applications}.
\newblock Springer New York, 2013.

\bibitem{ghanem_construction_2006}
R.~G. Ghanem and A.~Doostan.
\newblock On the construction and analysis of stochastic models:
  Characterization and propagation of the errors associated with limited data.
\newblock {\em Journal of Computational Physics}, 217(1):63--81, Sept. 2006.

\bibitem{GHANEM1996289}
R.~G. Ghanem and R.~M. Kruger.
\newblock Numerical solution of spectral stochastic finite element systems.
\newblock {\em Computer Methods in Applied Mechanics and Engineering},
  129(3):289 -- 303, 1996.

\bibitem{Giles_MLMC}
M.~B. Giles.
\newblock Multilevel {M}onte {C}arlo path simulation.
\newblock {\em Operations Research}, 56(3):607--617, 2008.

\bibitem{hou_li_zhang16_2}
Y.~T. Hou, Q.~Li, and P.~Zhang.
\newblock Exploring the locally low dimensional structure in solving random
  elliptic {PDE}s.
\newblock {\em {SIAM} Multiscale Model. Simul.}, accepted, 2016.

\bibitem{hou_li_zhang16_1}
Y.~T. Hou, Q.~Li, and P.~Zhang.
\newblock A sparse decomposition of low rank symmetric positive semi-definite
  matrices.
\newblock {\em {SIAM} Multiscale Model. Simul.}, accepted, 2016.

\bibitem{HJ16}
J.~Hu and S.~Jin.
\newblock A stochastic {G}alerkin method for the {B}oltzmann equation with
  uncertainty.
\newblock {\em J. Comput. Phys.}, 315:150--168, 2016.

\bibitem{JLM16}
S.~Jin, J.~G. Liu, and Z.~Ma.
\newblock Uniform spectral convergence of the stochastic galerkin method for
  the linear transport equations with random inputs in diffusive regime and a
  micro-macro decomposition based asymptotic preserving method.
\newblock {\em submitted}, 2016.

\bibitem{JL16}
S.~Jin and L.Liu.
\newblock An asymptotic-preserving stochastic {G}alerkin method for the
  semiconductor {B}oltzmann equation with random inputs and diffusive scalings.
\newblock {\em {SIAM} Multiscale Model. Simul.}, 2016.

\bibitem{JLu16}
S.~Jin and H.~Lu.
\newblock An {A}symptotic-{P}reserving stochastic {G}alerkin method for the
  radiative heat transfer equations with random inputs and diffusive scalings.
\newblock {\em preprint}, 2016.

\bibitem{JXZ15}
S.~Jin, D.~Xiu, and X.~Zhu.
\newblock Asymptotic-preserving methods for hyperbolic and transport equations
  with random input and diffusive scalings.
\newblock {\em J. Comput. Phys.}, 289:35--52, 2015.

\bibitem{JZ16}
S.~Jin and Y.~Zhu.
\newblock The {V}lasov-{P}oisson-{F}okker-{P}lanck system with uncertainty and
  a one-dimensional asymptotic-preserving method.
\newblock {\em preprint}, 2016.

\bibitem{LiLuSun2016_half}
Q.~Li, J.~Lu, and W.~Sun.
\newblock A convergent method for linear half-space kinetic equation.
\newblock {\em Math. Model. Numer. Anal.}, in press.

\bibitem{LiLuSun2015_general}
Q.~Li, J.~Lu, and W.~Sun.
\newblock Half-space kinetic equations with general boundary conditions.
\newblock {\em Math. Comp.}, in press.

\bibitem{LiLuSun2016_JSP}
Q.~Li, J.~Lu, and W.~Sun.
\newblock Validity and regularization of classical half-space equations.
\newblock {\em J. Stat. Phys.}, in press.

\bibitem{LiLuSun2015JCP}
Q.~Li, J.~Lu, and W.~Sun.
\newblock Diffusion approximations of linear transport equations: Asymptotics
  and numerics.
\newblock {\em J. Comp. Phys}, 292:141--167, 2015.

\bibitem{LW16}
Q.~Li and L.~Wang.
\newblock Uniform regularity for linear kinetic equations with random input
  based on hypocoercivity.
\newblock {\em arxiv/1612.01219}, 2016.

\bibitem{NTW08}
F.~Nobile, R.~Tempone, and C.~Webster.
\newblock A sparse grid stochastic collocation method for partial differential
  equations with random input data.
\newblock {\em {SIAM} J. Numer. Anal.}, 46(3):2309--2345, 2008.

\bibitem{nobile_anisotropic_2008}
F.~Nobile, R.~Tempone, and C.~G. Webster.
\newblock An anisotropic sparse grid stochastic collocation method for partial
  differential equations with random input data.
\newblock {\em {SIAM} J. Numer. Anal.}, 46(5):2411--2442, Jan. 2008.

\bibitem{nobile_sparse_2008}
F.~Nobile, R.~Tempone, and C.~G. Webster.
\newblock A sparse grid stochastic collocation method for partial differential
  equations with random input data.
\newblock {\em {SIAM} J. Numer. Anal.}, 46(5):2309--2345, Jan. 2008.

\bibitem{SchwabTodor_sparse}
C.~Schwab and R.-A. Todor.
\newblock Sparse finite elements for elliptic problems with stochastic loading.
\newblock {\em Numerische Mathematik}, 95(4):707--734, 2003.

\bibitem{XiuHesthaven_collocation}
D.~Xiu and J.~S. Hesthaven.
\newblock High-order collocation methods for differential equations with random
  inputs.
\newblock {\em {SIAM} J. Sci. Comput.}, 27(3):1118--1139, 2005.

\bibitem{Xiu_gpc}
D.~Xiu and G.~E. Karniadakis.
\newblock Modeling uncertainty in flow simulations via generalized polynomial
  chaos.
\newblock {\em Journal of Computational Physics}, 187(1):137 -- 167, 2003.

\bibitem{ZG12}
G.~Zhang and M.~Gunzburger.
\newblock Error analysis of a stochastic collocation method for parabolic
  partial differential equations with random input data.
\newblock {\em {SIAM} J. Numer. Anal.}, 50(4):1922--1940, 2012.

\end{thebibliography}
\end{document}